\newtheorem{thm}{Theorem}[section]
\newtheorem{defi}{Definition}[section]
\newcommand{\be}{\begin{equation}}
\newcommand{\ee}{\end{equation}}
\numberwithin{equation}{section}
\newcommand{\bea}{\begin{eqnarray}}
\newcommand{\eea}{\end{eqnarray}}
\newcommand{\beb}{\begin{eqnarray*}}
\newcommand{\eeb}{\end{eqnarray*}}
\begin{document}
\title{Rough Cauchy sequences in a cone metric space}
\author{Rahul Mondal}
\address{Department of Mathematics, Vivekananda Satavarshiki Mahavidyalaya, Manikpara, Jhargram -721513, West Bengal, India.} 

\email{imondalrahul@gmail.com, rahul04051987@gmail.com}

\begin{abstract}
Here we have introduced the idea of rough Cauchyness of sequences in a cone metric space. Also here we have discussed several basic properties of rough Cauchy sequences in a cone metric space using the idea of Phu \cite{PHU}.
\end{abstract}
\noindent\footnotetext{$\mathbf{2010}$\hspace{5pt}AMS\; Subject\; Classification: 40A05, 40A99.\\
{Key words and phrases: Rough convergence, rough Cauchy sequence, cone, cone metric space.}}
\maketitle
\section{\bf{Introduction}}
\noindent Metric spaces have been generalized in many ways. Huang and Zhang \cite{HLG} introduced the idea of cone metric spaces where the idea of distance have been generalized to a vector through the idea
of a cone defined in a ordered Banach space.\\ 
\indent Phu \cite{PHU} introduced the idea of rough convergence of sequences as a generalization of ordinary convergence of sequences in a normed linear space in 2001. There he also introduced the idea
of rough Cauchyness of sequences as a generalization of Cauchyness of sequences. Phu \cite{PHU} discussed about the usefulness of rough convergence and some of its basic properties. Phu \cite{PHU1} also
extended the idea of rough convergence of sequences in an infinite dimensional normed linear space in 2003 and also in 2008 Ayter \cite{AYTER1} introduced the  idea of rough statistical convergence. Recently Banerjee and Mondal \cite{RMROUGH} studied the idea of rough convergence of sequences in a cone metric space. Many works have been done by many authors \cite{AYTER2, PMROUGH1, PMROUGH2} using the idea of Phu. Here we have discussed the rough Cauchyness of sequences in a cone metric space.\\
\section{\bf{Preliminaries}}\label{preli}
\begin{defi}\cite{PHU}
Let $\{ x_{n} \}$ be a sequence in a normed linear space $(X, \left\| . \right\|)$,  and $r$ be a nonnegative real number. Then $\{ x_{n} \}$ is said to be $r$-convergent to $x$ if for any $\epsilon >0$, there exists a natural number $k$ such that $\left\|x_{n} - x \right\| < r +\epsilon$ for all $n \geq k$.
\end{defi}
\begin{defi} \cite{HLG}
Let $E$ be a real Banach space and $P$ be a subset of $E$. Then $P$ is called a cone if and only if $(i)$ $P$ is closed nonempty, and $P \neq \{0\}$\\
$(ii)$ $a,b \in \mathbb{R}$, $a,b \geq 0$, $x,y \in P$ implies $ax+by \in P$.\\
$(iii)$ $x \in P$ and $-x \in P$ implies $x=0$.
\end{defi}
Let $E$ be a real Banach space and $P$ be a cone in $E$. Let us use the partial ordering \cite{HLG} with respect to P by $x \leq y$ if and only if $y-x \in P$. We shall write $x <y$ to indicate that $x \leq y$ but $x \neq y$.\\
\indent Also by $x<< y$, we mean $y -x \in intP$, the interior of P. The cone P is called normal if there is a number $K >0$ such that for all $x,y \in E$, $0 \leq x \leq y$ implies $||x|| \leq K ||y||$. 
\begin{defi} \cite{HLG}
Let $X$ be a non empty set. If the mapping $d:X\times X \longrightarrow E$ satisfies the following three conditions\\ 
$(d1)$ $0 \leq d(x,y)$ for all $x,y \in X$ and $d(x,y)=0$ if and only if $x=y$;\\
$(d2)$ $d(x,y)=d(y,x)$ for all $x,y \in X$;\\
$(d3)$ $d(x,y) \leq d(x,z) + d(z,y)$ for all $x, y, z \in X$; then $d$ is called a cone metric on $X$, and $(X,d)$ is called a cone metric space.\\
\indent It is clear that a cone metric space is a generalization of metric spaces. Throughout $(X,d)$ or simply $X$ stands for a cone metric space which is associated with a real Banach space $E$ with a cone $P$, $\mathbb{R}$ for the set of all real numbers, $\mathbb{N}$ for the set of all natural numbers, sets are always subsets of $X$ unless otherwise stated.\end{defi}
\begin{defi} \cite{HLG}
Let $(X, d)$ be a cone metric space. A sequence $\{ x_{n} \}$ in $X$ is said to be convergent to $x\in X$ if for every $c \in E$ with $0<<c$ there is $k\in \mathbb{N}$ such that $d(x_{n}, x)<<c$, whenever for all $n>k$.
\end{defi}
We know that in a real Banach space $E$ with cone $P$. If $x_{0} \in intP$ and $c(> 0)\in \mathbb{R}$ then $cx_{0} \in intP$ and if $x_{0} \in P$ and $y_{0} \in intP$ then $x_{0} + y_{0} \in intP$. Hence we can also say that if $x_{0}, y_{0} \in intP$ then $x_{0} + y_{0} \in intP$. Also it has been discussed in \cite{RMROUGH} that a real normed linear space is always connected and if $E$ be a real Banach space with cone $P$ then $0 \notin intP$.
\begin{defi}\cite{PHU, RMROUGH} 
Let $(X,d)$ be a cone metric space. A sequence $\left\{x_{n} \right\}$ in $X$ is said to be $r$-convergent to $x$ for some $r \in E$ with $0<<r$ or $r=0$ if for every $\epsilon$ with $0<< \epsilon$ there exists a $k \in \mathbb{N}$ such that $d(x_{n}, x)<<r + \epsilon$ for all $n \geq k$.
\end{defi}
Let $(X,d)$ be a cone metric space with normal cone $P$ and normal constant $k$. Then from \cite{RMROUGH} we can say for every $\epsilon >0$, we can choose $c \in E$ with $c \in int P$ and $k\left\|c\right\| < \epsilon$ and also for each $c \in E$ with $0<<c$, there is a $\delta > 0$, such that $\left\|x\right\| < \delta $ implies $c- x \in int P$. We will use this idea in the next section of our work.
\section{\bf{$r$-Cauchy sequences in a cone metric space}}
\begin{defi}
Let $(X, d)$ be a cone metric space. A sequence $\{x_{n} \}$ in $X$ is said to be a cauchy sequence in $X$ if for every $(0 <<)\epsilon$ there exists a natural number $m$ such that $d(x_{i}, x_{j})<<
\epsilon$ for all $i,j \geq m$.
\end{defi}
\begin{defi}
 A sequence $\{x_{n} \}$ in a cone metric space $(X, d)$ is said to be a $r$-cauchy sequence for some$(0<<)r$ or $r=0$ in this space if for every $(0 <<)\epsilon$ there exists a natural number $m$ such that $d(x_{i}, x_{j})<< r + \epsilon$ fior all $i,j \geq m$.
\end{defi}
It should be noted that when $r=0$ then rough Cauchyness coincides with the Cauchyness.
\begin{thm}
Every $\frac{r}{2}$-convergent sequence in a cone metric space $(X, d)$ is $r$-cauchy for every $r$ as defined above.
\end{thm}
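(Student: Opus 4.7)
The plan is to imitate the standard ``$\varepsilon/2$-argument'' from classical metric space theory, with the inequalities involving $\leq$ and $\ll$ handled by the interior-arithmetic facts that are already recalled in the preliminaries.

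First I would fix an $\frac{r}{2}$-convergent sequence $\{x_n\}$, say $\frac{r}{2}$-convergent to some $x\in X$. Given an arbitrary $\varepsilon \in E$ with $0 \ll \varepsilon$, the natural candidate tolerance to feed into the convergence hypothesis is $\frac{\varepsilon}{2}$; since scaling an interior element by a positive real keeps it in $\operatorname{int} P$, we still have $0 \ll \frac{\varepsilon}{2}$. Applying the definition of $\frac{r}{2}$-convergence with tolerance $\frac{\varepsilon}{2}$ yields a natural number $k$ such that $d(x_n,x)\ll \frac{r}{2}+\frac{\varepsilon}{2}$ for every $n\geq k$.

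Next, for any $i,j\geq k$ I would invoke the triangle inequality $(d3)$ to write
\[
d(x_i,x_j)\leq d(x_i,x)+d(x,x_j),
\]
and then add the two ``$\ll$'' estimates coordinatewise to obtain
\[
d(x_i,x)+d(x,x_j)\ll \Bigl(\tfrac{r}{2}+\tfrac{\varepsilon}{2}\Bigr)+\Bigl(\tfrac{r}{2}+\tfrac{\varepsilon}{2}\Bigr)=r+\varepsilon.
\]
Finally I would chain these through the observation (already recorded in the excerpt) that $a\leq b$ together with $b\ll c$ forces $a\ll c$, because $c-a=(c-b)+(b-a)\in \operatorname{int}P + P\subseteq \operatorname{int}P$. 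This produces $d(x_i,x_j)\ll r+\varepsilon$ for all $i,j\geq k$, which is exactly the $r$-Cauchy condition.

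The only substantive point that needs care (and which constitutes the ``main obstacle'' in an otherwise routine argument) is the bookkeeping of the two order relations $\leq$ and $\ll$: one must use both closures of $\operatorname{int}P$ under addition, namely $\operatorname{int}P+\operatorname{int}P\subseteq \operatorname{int}P$ for the step that combines the two convergence estimates, and $P+\operatorname{int}P\subseteq \operatorname{int}P$ for the step that passes from the triangle inequality to the final strict interior inequality. Both facts are explicitly noted in the preliminaries, so once they are cited the proof closes with no further work.
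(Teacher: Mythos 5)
Your proposal is correct and follows essentially the same route as the paper's own proof: apply the $\frac{r}{2}$-convergence definition with tolerance $\frac{\varepsilon}{2}$, use the triangle inequality, add the two interior estimates via $\operatorname{int}P+\operatorname{int}P\subseteq\operatorname{int}P$, and conclude with $P+\operatorname{int}P\subseteq\operatorname{int}P$. No substantive differences to report.
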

\begin{proof}

 Let $\{x_{n} \}$ be a $\frac{r}{2}$-convergent sequence in a cone metric space $(X, d)$ and converges to $x$ in $X$ and consider a arbitrary $\epsilon \in intP$. Now for the $(0 <<)\epsilon$ there exists a natural number $m$ such that $d(x_{n}, x)<< \frac{r}{2} +\frac{\epsilon}{2}$ for all $n \geq m$, so $[\frac{r}{2} +\frac{\epsilon}{2}] - d(x_{n}, x) \in intP$ for all $n \geq m$. Now if $r \in intP $ then for any two natural numbers $i, j$ we have $d(x_{i}, x_{j})\leq d(x_{i}, x)+ d(x_{j}, x)$ $\longrightarrow(i)$. So $[d(x_{i}, x)+ d(x_{j}, x)]- d(x_{i}, x_{j}) \in P$. Also for $i \geq m$ and $j \geq m$  we have $[\frac{r}{2} + \frac{\epsilon}{2}]-d(x_{i}, x) \in intP$ and $[\frac{r}{2} +\frac{\epsilon}{2}]-d(x_{j}, x) \in intP$. Hence $[r+\epsilon] -[d(x_{i}, x)+ d(x_{j}, x)] \in intP \longrightarrow (ii)$ Therefore using $(i)$ and $(ii)$ we have $d(x_{i}, x_{j}) << r+\epsilon$.
\end{proof}
The idea of boundedness of a sequence in cone metric spaces is similar as in metric spaces and this has been discussed in \cite{RMROUGH}. We recall that a sequence $\{x_{n} \}$ in a cone metric space $(X,d)$ is bounded if there is a $g \in intP$ such that $d(x_{m}, x_{n}) << g$ for all $m \in \mathbb{N}$ and $n \in \mathbb{N}$.\\ 
\begin{thm}
A bounded sequence in a cone metric space is always $r$-Cauchy for some $r$ as defined above.\end{thm}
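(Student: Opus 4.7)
The plan is to show that the bound $g$ itself (or more precisely, we may take $r=g$) already witnesses the $r$-Cauchy property, and that no tail-selection is needed: the inequality holds for \emph{all} indices.

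First I would unpack the hypothesis. By assumption there exists $g\in\mathrm{int}\,P$ such that $d(x_m,x_n)\ll g$ for every $m,n\in\mathbb{N}$. In particular, $g-d(x_m,x_n)\in\mathrm{int}\,P$ for all $m,n$. Since $0\ll g$, this $g$ is an admissible value of the roughness parameter in Definition~3.2.

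Next, I would fix an arbitrary $\epsilon\in E$ with $0\ll\epsilon$ and show that $d(x_i,x_j)\ll g+\epsilon$ for \emph{all} $i,j\in\mathbb{N}$. Write
\[
(g+\epsilon)-d(x_i,x_j)=\bigl(g-d(x_i,x_j)\bigr)+\epsilon .
\]
Both summands lie in $\mathrm{int}\,P$: the first by boundedness, the second by the choice of $\epsilon$. Invoking the standing fact recorded just before Definition~2.5, that the sum of two elements of $\mathrm{int}\,P$ lies in $\mathrm{int}\,P$, I conclude that $(g+\epsilon)-d(x_i,x_j)\in\mathrm{int}\,P$, i.e.\ $d(x_i,x_j)\ll g+\epsilon$.

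Finally, I would note that since this holds for every pair $i,j$, the natural number $m$ required in Definition~3.2 can simply be taken to be $m=1$ (or any other index), and $\epsilon$ was arbitrary with $0\ll\epsilon$. Therefore $\{x_n\}$ is $r$-Cauchy with $r=g$, completing the proof. There is really no hard step here; the only thing to be careful about is the explicit invocation of the \emph{interior-sum} property of $P$ to pass from the strict inequality $d(x_i,x_j)\ll g$ to $d(x_i,x_j)\ll g+\epsilon$, which is exactly the tool the paper has already set up.
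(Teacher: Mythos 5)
Your proposal is correct and follows essentially the same route as the paper: take $r$ to be the bound $g$ itself, decompose $(g+\epsilon)-d(x_i,x_j)$ as the sum of $g-d(x_i,x_j)\in\mathrm{int}\,P$ and $\epsilon\in\mathrm{int}\,P$, and invoke the interior-sum property to get $d(x_i,x_j)\ll g+\epsilon$ for all $i,j$. The paper's proof is identical in substance (with the bound called $s$ there), so no further comparison is needed.
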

\begin{proof}
Let $\{ x_{n} \}$ be a bounded sequence in a cone metric space $(X,d)$. So there exists a $(0<<)s$ such that $d(x_{n}, x_{m})<< s$ for all $m,n \in \mathbb{N}$. Therefore $[s - d(x_{n}, x_{m})] \in intP$
for all $m,n \in \mathbb{N}$. Hence for any $(0<<) \epsilon$ we have $[s - d(x_{n}, x_{m})]+ \epsilon \in intP$ for all $m,n \in \mathbb{N}$. Therefore $d(x_{n}, x_{m})<< s + \epsilon$ for all $m,n \in \mathbb{N}$. So $\{ x_{n} \}$ is a $s$-Cauchy sequence in $X$.
\end{proof}
\begin{thm}
Let $(X, d)$ be a cone metric space with normal cone $P$ and normal constant $k$ and with the given condition $*$ as given bellow. If $\{x_{n} \}$ and $\{y_{n} \}$ be two $\frac{r}{2k^{2}}$-Cauchy sequence $(0<<r)$ in $X$ then the sequence $\{ d(x_{n}, y_{n}) \}$ is $||r||$-Cauchy.
\end{thm}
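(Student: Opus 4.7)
The plan is to run the same script as Theorem 3.1: bound the difference $d(x_{i},y_{i}) - d(x_{j},y_{j})$ in the cone order by a quadrilateral-type estimate, and then push that bound through the norm using normality together with the auxiliary hypothesis (condition $*$).

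First I would fix an arbitrary $\epsilon > 0$. Using the property recalled at the close of Section \ref{preli}, I can choose $\epsilon' \in intP$ small enough that $2k^{2}\|\epsilon'\| < \epsilon$. Applying the $\frac{r}{2k^{2}}$-Cauchy hypothesis separately to $\{x_{n}\}$ and $\{y_{n}\}$, each with slack $\epsilon'$, would then yield a single $m \in \mathbb{N}$ for which $d(x_{i},x_{j}) \ll \frac{r}{2k^{2}} + \epsilon'$ and $d(y_{i},y_{j}) \ll \frac{r}{2k^{2}} + \epsilon'$ hold simultaneously for all $i,j \geq m$. Normality of $P$ upgrades each of these cone inequalities to the norm bound $\|d(x_{i},x_{j})\| \leq \frac{\|r\|}{2k} + k\|\epsilon'\|$, and symmetrically for the $y$-sequence.

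Next I would invoke the cone triangle inequality in both orders, $d(x_{i},y_{i}) \leq d(x_{i},x_{j}) + d(x_{j},y_{j}) + d(y_{j},y_{i})$ and its role-reversed twin, to obtain the order sandwich
\[ -[d(x_{i},x_{j}) + d(y_{i},y_{j})] \leq d(x_{i},y_{i}) - d(x_{j},y_{j}) \leq d(x_{i},x_{j}) + d(y_{i},y_{j}). \]

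The main obstacle is turning this sandwich in the cone order into a norm estimate, since the middle term is not in $P$ and ordinary normality cannot be applied to it directly. This is exactly where condition $*$ is needed; I anticipate it to provide (or to be equivalent to) the implication that $-u \leq v \leq u$ with $u \in P$ forces $\|v\| \leq k\|u\|$, a standard strengthening of normality. Granted this, I would conclude
\[ \|d(x_{i},y_{i}) - d(x_{j},y_{j})\| \leq k\bigl(\|d(x_{i},x_{j})\| + \|d(y_{i},y_{j})\|\bigr) \leq k\left(\tfrac{\|r\|}{k} + 2k\|\epsilon'\|\right) = \|r\| + 2k^{2}\|\epsilon'\| < \|r\| + \epsilon, \]
which is precisely the $\|r\|$-Cauchy condition for $\{d(x_{n},y_{n})\}$. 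The denominator $2k^{2}$ in the hypothesis is calibrated exactly to absorb the two factors of $k$ picked up from normality and from condition $*$; any weaker roughness parameter would lose the matching constant and the estimate would fail to reach $\|r\| + \epsilon$.
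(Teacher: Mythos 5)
Your proposal is correct and follows essentially the same route as the paper: the same two-sided triangle-inequality sandwich for $d(x_{i},y_{i})-d(x_{j},y_{j})$, the same use of condition $*$ (which is exactly the implication $-u\leq v\leq u$, $u\in P$ $\Rightarrow$ $\|v\|\leq k\|u\|$ that you anticipated), and the same final normality step. Your constant bookkeeping (demanding $2k^{2}\|\epsilon'\|<\epsilon$) is in fact slightly more careful than the paper's, whose own choice $k\|c\|<\epsilon$ actually yields the bound $\|r\|+k^{2}\|c\|$ rather than the stated $\|r\|+k\|c\|$.
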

$(*)$ If $p \leq c$ and $-p \leq c$ with $c \in P$ then $||p|| \leq k ||c||$, where $p$, $c \in E$.\\
\begin{proof}
Let $\epsilon >0$ be preassigned real number. So by using the property of a normal cone there exists a $c \in intP$ such that $k||c|| < \epsilon $. Since $c \in intP$, we have $\frac {c}{2} \in intP$ and also $\frac{r}{2k^{2}}\in intP$. Now there exists two positive integers $k_{1}$ and $k_{2}$ such that $d(x_{i}, x_{j})<< \frac{r}{2k^{2}} + \frac{c}{2}  $ for all $i,j \geq k_{1}$ and $d(y_{i}, y_{j})<< \frac{r}{2k^{2}} + \frac{c}{2} $ for all $i,j \geq k_{2}$. If $k$ be the maximum of $k_{1}$ and $k_{2}$ then by using the property of normal cone we have $||d(x_{i}, x_{j})||\leq k||\frac{r}{2k^{2}} + \frac{c}{2} || $ and $||d(y_{i},y_{j})||\leq k||\frac{r}{2k^{2}} + \frac{c}{2} || $ for all $i,j \geq k$.\\
\indent Now $d(x_{i},y_{i}) - d(x_{j},y_{j}) \leq d(x_{i}, x_{j}) + d(y_{i},y_{j})$ and $d(x_{j},y_{j}) - d(x_{i},y_{i}) \leq  d(x_{i}, x_{j}) + d(y_{i},y_{j})$. Since $[d(x_{i}, x_{j}) + d(y_{i},y_{j})] \in P$, by the given condition we have $||d(x_{i},y_{i})- d(x_{j},y_{j})|| \leq k ||d(x_{i}, x_{j}) + d(y_{i},y_{j})|| \leq k ||d(x_{i}, x_{j})|| +k|| d(y_{i},y_{j})||$. Hence for all $i, j \geq k$ we have $||d(x_{i},y_{i})- d(x_{j},y_{j})|| \leq ||r|| + k||c|| < ||r|| + \epsilon $.
\end{proof}
\begin{thm}
Let $\{ x_{n} \}$ and $\{ y_{n} \}$ be two sequences in a cone metric space $(X, d)$ such that $d( x_{n}, y_{n}) \longrightarrow 0$ as $n \longrightarrow \infty$. Then $\{ x_{n} \}$ is $r$-Cauchy if and only if $\{ y_{n} \}$ is $r$-Cauchy. 
\end{thm}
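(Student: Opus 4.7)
The plan is to exploit the full symmetry of the set-up. Since $d$ is symmetric, $d(x_n,y_n)\to 0$ is the same statement as $d(y_n,x_n)\to 0$, and the two sequences $\{x_n\}$ and $\{y_n\}$ play symmetric roles in the hypothesis. It therefore suffices to establish one implication; I will prove that if $\{x_n\}$ is $r$-Cauchy then so is $\{y_n\}$.

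Fix an arbitrary $\epsilon$ with $0<<\epsilon$. Because $intP$ is closed under positive scalar multiplication (as recalled in the preliminaries), $\tfrac{\epsilon}{3}\in intP$. The $r$-Cauchyness of $\{x_n\}$ then supplies an $N_1\in\mathbb{N}$ with $d(x_i,x_j)<< r+\tfrac{\epsilon}{3}$ for all $i,j\ge N_1$, and the hypothesis $d(x_n,y_n)\to 0$ supplies an $N_2\in\mathbb{N}$ with $d(x_n,y_n)<<\tfrac{\epsilon}{3}$ for all $n\ge N_2$. Setting $N=\max\{N_1,N_2\}$ and applying the triangle inequality twice, for $i,j\ge N$ I obtain
\[
d(y_i,y_j)\le d(y_i,x_i)+d(x_i,x_j)+d(x_j,y_j).
\]

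It then remains to lift this ``$\le$'' bound to the strict ``$<<$'' bound $d(y_i,y_j)<<r+\epsilon$. The right-hand side is a sum of three vectors each dominated in the ``$<<$'' sense by $\tfrac{\epsilon}{3}$, $r+\tfrac{\epsilon}{3}$, and $\tfrac{\epsilon}{3}$ respectively; by iterated use of the fact noted in the preliminaries that $P+intP\subseteq intP$, the difference $(r+\epsilon)-[d(y_i,x_i)+d(x_i,x_j)+d(x_j,y_j)]$ lies in $intP$. Combining this with the triangle-inequality bound via the standard rule ``$a\le b$ and $c-b\in intP$ imply $c-a\in intP$'' then yields $d(y_i,y_j)<<r+\epsilon$, proving that $\{y_n\}$ is $r$-Cauchy.

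The main, and rather modest, obstacle is the careful bookkeeping between the relations $\le$ and $<<$ on $E$: one has to invoke closure of $intP$ under positive scaling (for splitting $\epsilon$ into thirds), closure under the addition $intP+P\subseteq intP$ (for combining the three estimates), and the compatibility between $\le$ and $<<$ at the final step. All of these ingredients have already been recorded in Section~\ref{preli}, so the argument is essentially the classical ``$\epsilon/3$''-proof transported into the cone metric setting.
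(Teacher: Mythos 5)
Your proposal is correct and follows essentially the same route as the paper: reduce to one implication by symmetry, split $\epsilon$ into thirds, apply the triangle inequality twice, and combine. You are in fact more careful than the paper at the final step, where you explicitly justify passing from the $\le$ bound to the $<<$ bound via $intP+intP\subseteq intP$ and the rule that $a\le b$ and $c-b\in intP$ imply $c-a\in intP$; the paper leaves this implicit (and contains a couple of typographical slips in its chain of inequalities).
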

\begin{proof}
Let $\{ x_{n} \}$ be a $r$-Cauchy sequence in $(X, d)$. Then for $0(<<\epsilon)$ there exists two positive integers $k_{1}$ and $k_{2}$ such that $d(x_{i}, x_{j})<<r+ \frac{\epsilon}{3}$ for all $i,j \geq k_{1}$ and $d(x_{i}, y_{i})<< \frac{\epsilon}{3}$ for all $i \geq k_{2}$. Now $d(y_{i}, y_{j}) \leq d(x_{i}, y_{i}) + d(x_{i}, y_{j})$ and also $d(x_{i}, y_{j}) \leq d(x_{i}, x_{j}) + d(x_{j}, y_{j})$. Hence $d(x_{j}, y_{j}) \leq d(x_{i}, y_{i}) + d(x_{i}, x_{j}) + d(x_{j}, y_{j})$. If $k= max(k_{1}, k_{2})$, then for all $i,j \geq k$ we have $d(x_{i}, y_{j})<< r + \epsilon $ and hence $\{ y_{n} \}$ is $r$-Cauchy.\\ 
Conversely if $\{ y_{n} \}$ is $r$-Cauchy then similarly we can show that $\{ x_{n} \}$ is $r$-Cauchy. 
\end{proof}
\begin{thm}
Let $(X,d)$ be a cone metric space with normal cone $P$ and normal constant $k$. If a sequence $\left\{x_{n}\right\}$ in $(X,d)$ is $r$-Cauchy and also converges to $x$ in $X$ then the sequence $\left\{ d(x_{n}, x)-r\right\}$ is converges to 0 in $E$, provided that $\left\{ d(x_{n}, x)-r\right\}$ is a sequence in $P$.
\end{thm}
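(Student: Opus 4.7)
The plan is to reduce the conclusion $\|d(x_n,x)-r\|\to 0$ to the norm estimate $\|d(x_n,x)\|\to 0$ delivered by cone-metric convergence, by squeezing $d(x_n,x)-r$ between $0$ and $d(x_n,x)$ in the cone order and then applying normality of $P$ to turn the vector estimate into a numerical one. Only convergence and the hypothesis $d(x_n,x)-r\in P$ will actually be needed; as I note at the end, the $r$-Cauchy hypothesis plays no role in the argument I sketch.

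In either admissible case ($r=0$ or $0<<r$) one has $r\in P$, so combined with $d(x_n,x)-r\in P$ this yields the order sandwich $0\le d(x_n,x)-r\le d(x_n,x)$, the right inequality being precisely the statement $r\in P$. Normality of $P$ with constant $k$ then gives $\|d(x_n,x)-r\|\le k\,\|d(x_n,x)\|$, so the task reduces to making $\|d(x_n,x)\|$ small. For an arbitrary $\epsilon>0$, I would use the remark at the end of Section~\ref{preli} to choose $c\in intP$ with $k\|c\|<\epsilon/k$, i.e.\ $k^{2}\|c\|<\epsilon$. Since $x_n\to x$ in the cone metric, there is $N$ with $d(x_n,x)<<c$ for every $n\ge N$, and one more application of normality to $0\le d(x_n,x)\le c$ gives $\|d(x_n,x)\|\le k\|c\|$; chaining the two inequalities produces $\|d(x_n,x)-r\|\le k^{2}\|c\|<\epsilon$ for every $n\ge N$, which is the required convergence in $E$.

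The main structural point — and the only real subtlety — is the observation that the hypothesis $d(x_n,x)-r\in P$ must be coupled with the fact $r\in P$ so as to flank $d(x_n,x)-r$ on both sides in the cone order; without an upper bound there is no direct way to use normality to convert the vector statement into a norm estimate. If one insisted on using the $r$-Cauchy hypothesis, the natural attempt would bound $d(x_n,x)$ through a triangle chain $d(x_n,x_m)+d(x_m,x)$ for a fixed large $m$ and then estimate $\|d(x_n,x_m)-r\|$ and $\|d(x_m,x)\|$ separately; but controlling $\|d(x_n,x_m)-r\|$ would require a two-sided property such as the condition $(*)$ used in Theorem~3.3, which is not assumed here, so the direct route above appears preferable.
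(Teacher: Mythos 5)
Your proof is correct, but it takes a genuinely different route from the paper's. The paper uses both hypotheses on $\{x_n\}$: it combines the $r$-Cauchy estimate $d(x_n,x_m)<<r+\frac{c}{2}$ with the convergence estimate $d(x_m,x)<<\frac{c}{2}$ via the triangle inequality to obtain $d(x_n,x)-r<<c$, and then invokes the positivity hypothesis $d(x_n,x)-r\in P$ together with a single application of normality to get $\left\|d(x_n,x)-r\right\|\leq k\left\|c\right\|<\epsilon$. You instead discard the $r$-Cauchy hypothesis entirely and sandwich $0\leq d(x_n,x)-r\leq d(x_n,x)\leq c$, paying for this with a second application of normality and a constant $k^{2}$ in place of $k$ --- harmless for a limit statement. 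Your route is more economical in hypotheses and shows that the conclusion already follows from convergence plus the assumption $d(x_n,x)-r\in P$; it also makes visible a degeneracy that the paper's argument conceals: the same sandwich applied to $r$ itself (namely $0\leq r\leq d(x_n,x)$, since both $r$ and $d(x_n,x)-r$ lie in $P$) gives $\left\|r\right\|\leq k\left\|d(x_n,x)\right\|\rightarrow 0$, hence $r=0$, so in a normal cone the hypotheses of the theorem can only be met in the unrough case. Both arguments are valid; the paper's uses the stated hypotheses and gets the sharper constant, while yours is leaner and more revealing about what is actually driving the result.
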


\begin{proof}
Let $\left\{x_{n}\right\}$ be a $r$-Cauchy sequence and converges to $x$ in $X$. Let $\epsilon >0$ be preassigned. Then we have an element $c \in E$ with $0<<c$ and $k \left\| c \right\| < \epsilon$. Now for $0<<c$ there exists  $k_{1}, k_{2} \in \mathbb{N}$ such that $d(x_{n},x_{m})<<r+\frac{c}{2}$ for all $n, m \geq k_{1}$ and $d(x_{m},x)<< \frac{c}{2}$ for all $m \geq k_{2}$. Hence $0 \leq d(x_{n},x)-r<<c$ for all $ n \geq k$, $k$ is the maximum of $k_{1}$ and $k_{2}$. Now by using the property of a normal cone we have $\left\| d(x_{n},x)-r \right\| \leq k \left\| c \right\| < \epsilon$ for all $n \geq k$. Therefore $\left\{ d(x_{n},x)-r\right\}$ converges to $0$ in $E$.
\end{proof}

\end{document}